\documentclass{amsart}
\usepackage{amssymb, amsmath, amscd, latexsym, mathrsfs, appendix}
\usepackage{graphics}
\usepackage{amsmath}
\usepackage{amsxtra}
\usepackage{amsfonts}
\usepackage{appendix}
\usepackage[all]{xy}

\usepackage{color}

\usepackage{tikz}
\usetikzlibrary{matrix,arrows}

\usepackage{hyperref}
\hypersetup{
    colorlinks,
    citecolor=black,
    filecolor=black,
    linkcolor=black,
    urlcolor=black
}

\setcounter{secnumdepth}{0}

% ----------------------------------------------------------------
\vfuzz2pt % Don't report over-full v-boxes if over-edge is small
\hfuzz2pt % Don't report over-full h-boxes if over-edge is small

% THEOREMS -------------------------------------------------------

\newtheorem{thm}{Theorem}[]

\newtheorem{cor}[thm]{Corollary}

% ------------------- macros ---------------------------------------

\newcommand{\map}{\textup{map}}

\newcommand{\emb}{\textup{emb}}
\newcommand{\config}{\mathsf{con}}

\newcommand{\loc}{\textup{loc}}
\newcommand{\imm}{\textup{imm}}

\newcommand{\fin}{\mathsf{Fin}}
\newcommand{\finplus}{{\fin_*}}

\newcommand{\RR}{\mathbb R}

\newcommand{\inj}{\textup{injmap}}

% ----------------------------------------------------------------
\begin{document}

\title{Topological embeddings of disks via configuration categories}
\author{Pedro Boavida de Brito and Michael S. Weiss}%

\address{Dept. of Mathematics, Instituto Superior Tecnico, Univ. of Lisbon, Av. Rovisco Pais, Lisboa, Portugal}%
\email{pedrobbrito@tecnico.ulisboa.pt}

\address{Mathematics Institute, WWU M\"{u}nster, 48149 M\"{u}nster, Einsteinstrasse 62, Germany}%
 \email{m.weiss@uni-muenster.de}

\thanks{M.W. gratefully acknowledges the support of the Bundesministerium f\"ur Bildung und Forschung through the A.v.Humboldt foundation (Humboldt professorship, 2012-2017). P.B. gratefully acknowledges the support of FCT through grant SFRH/BPD/99841/2014.}

 %subjclass[2000]{123}
%\keywords{}%
%\date{\today}%
%\dedicatory{}%
%\commby{}%
% ----------------------------------------------------------------
\begin{abstract} 
The functor that takes a manifold to its configuration category exhibits a type of full faithfulness in some cases.
\end{abstract}
 
 \maketitle
 
 Let $N^n$ be a smooth manifold with boundary. Fix a smooth embedding $b$ from $\partial D^m$ to $\partial N$. We write $\inj_\partial(D^m, N)$ for the space of injective maps whose restriction to the boundary is $b$ and $\inj^s_\partial(D^m, N)$ for the union of path components which contain smooth embeddings. Similarly, we write $\emb^{TOP}_{\partial}(D^m, N)$ for the space of locally flat topological embeddings and $\emb^{TOP,s}_{\partial}(D^m, N)$ for the union of path components which contain smooth embeddings. We assume that $n - m > 3$ and $n \geq 5$. Under these circumstances, the inclusion of $\emb^{TOP}_{\partial}(D^m, N)$ in $\inj_\partial(D^m, N)$ is a weak equivalence (see \cite{Lashof}).

\medskip 

The following result is essentially an extension of \cite[Theorem 10.1]{BoavidaWeiss2}. We keep the notation from that paper.

\begin{thm} The evaluation map
\[
\inj^s_\partial(D^m, N) \to \RR \map_{\finplus}^\partial(\config(D^m), \config(N))
\]
is an almost weak equivalence, i.e. all its homotopy fibers are either empty or contractible.
\end{thm}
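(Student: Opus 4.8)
The plan is to deduce the statement from its smooth counterpart, \cite[Theorem 10.1]{BoavidaWeiss2}, by running the argument of that paper with locally flat topological embeddings in place of smooth ones. By the theorem of Lashof cited above, restricting the weak equivalence $\emb^{TOP}_\partial(D^m,N)\to\inj_\partial(D^m,N)$ to the path components that contain smooth embeddings yields a weak equivalence $\emb^{TOP,s}_\partial(D^m,N)\to\inj^s_\partial(D^m,N)$; so it suffices to treat the evaluation map out of $\emb^{TOP,s}_\partial(D^m,N)$. This map fits into a commuting triangle whose third vertex is the space $\emb_\partial(D^m,N)$ of smooth embeddings, with the inclusion $\emb_\partial(D^m,N)\to\emb^{TOP,s}_\partial(D^m,N)$ along one side and the two evaluation maps along the others; the smooth evaluation map is an almost weak equivalence by \cite[Theorem 10.1]{BoavidaWeiss2}. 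So it would be enough to know that the target $\RR\map_{\finplus}^\partial(\config(D^m),\config(N))$ receives the topological evaluation map as faithfully as it receives the smooth one.

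Concretely, one first checks that the identification, carried out in \cite{BoavidaWeiss2}, of this derived mapping space with the homotopy limit of the manifold-calculus (Weiss) tower of the embedding presheaf is insensitive to the smooth/topological distinction. Indeed, the configuration category of a manifold, and the comparison apparatus of \cite{BoavidaWeiss2}, depend only on configuration spaces and their collision strata, hence only on the underlying topological manifold and its tangent microbundle; and the data feeding the tower are the values of the embedding presheaf on a collar of $b$ and on finitely many disjoint interior disks, where the smooth version, the locally flat topological version, and --- by Lashof, in codimension $n-m>3$ --- the injective-map version all agree: on a collar because collars and tubular neighbourhoods are unique up to contractible choice in either category, and on interior disks by Lashof applied in $\textup{int}(N)$. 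Thus $\RR\map_{\finplus}^\partial(\config(D^m),\config(N))$ is simultaneously the homotopy limit of the smooth and of the topological Weiss tower of $\emb_\partial(D^m,N)$, and the map of the theorem is, up to this identification, the canonical comparison from $\emb^{TOP,s}_\partial(D^m,N)$ to the limit of its own tower.

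What remains --- and what I expect to be the main obstacle --- is convergence: the canonical map from $\emb^{TOP,s}_\partial(D^m,N)$ to the homotopy limit of its manifold-calculus tower is an almost weak equivalence. This is the topological analogue of the Goodwillie--Weiss and Goodwillie--Klein convergence theorem for embedding spaces. One route is to re-run the multiple-disjunction induction directly: decompose $D^m$ into handles relative to a collar of $b$, and estimate the connectivity of the maps comparing the embedding presheaf with its polynomial approximations, using the topological (Edwards--Kirby) isotopy extension theorem in place of its smooth version; the hypotheses $n-m>3$ and $n\geq 5$ are exactly what is needed for these estimates and, via Lashof, for locally flat embeddings of handles to be as tractable as smooth ones. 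An alternative, possibly shorter, route is to bootstrap from the smooth case: the discrepancy between the smooth and the topological embedding presheaf is governed by smoothing theory, a local and sheaf-theoretic matter that is therefore computed correctly by the Weiss tower, so that the convergence of the smooth tower (Goodwillie--Weiss, valid in codimension $\geq 3$) forces that of the topological one. Either way, combining this with the previous paragraph shows that $\inj^s_\partial(D^m,N)\simeq\emb^{TOP,s}_\partial(D^m,N)$ maps by a weak equivalence onto a union of path components of $\RR\map_{\finplus}^\partial(\config(D^m),\config(N))$ --- which is precisely the asserted almost weak equivalence, using that a composite of two maps each of which is a weak equivalence onto a union of path components of its target is again of that form.
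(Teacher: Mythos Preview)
Your plan is coherent but takes a substantially heavier route than the paper, and leaves its crux (topological tower convergence) only sketched. The paper does \emph{not} attempt to establish convergence of any topological Weiss tower. Instead it exploits that the source is a disk: pick a smooth basepoint $e\in\emb_\partial(D^m,N)$, thicken it to a tubular neighbourhood $f\co D^m\times D^{n-m}\hookrightarrow N$, and use post-composition by $f$ to map the commutative square
\[
\begin{CD}
\emb^{TOP,s}_\partial(D^m,D^n) @>>> \RR\map^\partial_{\finplus}(\config(D^m),\config(D^n)) \\
@VVV @VVV \\
\imm^{TOP,s}_\partial(D^m,D^n) @>>> \RR\map^\partial_{\finplus}(\config^{\loc}(D^m),\config^{\loc}(D^n))
\end{CD}
\]
to the analogous square with target $N$. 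Both squares are homotopy cartesian, and in the resulting cube the bottom face (immersions and local configuration categories, $D^n$ versus $N$) is cartesian by locality; hence so is the top face. The top row of the $D^n$ square has both entries contractible by \cite{BoavidaWeiss2}, so the homotopy fiber of the top row of the $N$ square over the basepoint determined by $e$ is contractible; varying $e$ over the smooth components finishes.

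Thus all global input is concentrated in the already-known case $N=D^n$; the passage to general $N$ is a formal cube manipulation. Your route, by contrast, demands a topological analogue of Goodwillie--Klein multiple disjunction --- a theorem in its own right, not supplied by the cited references, and in your proposal only outlined. Your smoothing-theory bootstrap suggestion is closer in spirit to the locality principles that make the bottom face of the paper's cube cartesian, but the paper's organization sidesteps any tower-convergence statement entirely.
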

 
\begin{proof}
Let $e$ be a neat smooth embedding $D^m \hookrightarrow N$ of an $m$-disk in $N$ which extends $b$. We extend $e$ to a smooth embedding $f : D^m \times D^{n-m} \to N$ by taking a normal tube so that $f^{-1}(\partial N)$ is $S^{m-1} \times D^{n-m}$. Composition with $f$ gives a map of squares from
\begin{equation} \label{eq:square1}
	\begin{tikzpicture}[descr/.style={fill=white}, baseline=(current bounding box.base)] ]
	\matrix(m)[matrix of math nodes, row sep=2.5em, column sep=2.5em,
	text height=1.5ex, text depth=0.25ex]
	{
	\emb^{TOP,s}_{\partial}(D^m,D^n) & \RR \map_{\finplus}^\partial(\config(D^m), \config(D^n)) \\
	\imm^{TOP,s}_{\partial}(D^m,D^n) & \RR \map_{\finplus}^\partial(\config^\loc(D^m), \config^\loc(D^n)) \\
	};
	\path[->,font=\scriptsize]
		(m-1-1) edge node [auto] {} (m-1-2);
	\path[->,font=\scriptsize]
		(m-2-1) edge node [auto] {} (m-2-2);
	\path[->,font=\scriptsize]
		(m-1-1) edge node [left] {} (m-2-1);
	\path[->,font=\scriptsize] 		
		(m-1-2) edge node [auto] {} (m-2-2);
	\end{tikzpicture}
\end{equation}
to
\begin{equation} \label{eq:square2}
	\begin{tikzpicture}[descr/.style={fill=white}, baseline=(current bounding box.base)] ]
	\matrix(m)[matrix of math nodes, row sep=2.5em, column sep=2.5em,
	text height=1.5ex, text depth=0.25ex]
	{
	\emb^{TOP,s}_{\partial}(D^m,N) & \RR \map_{\finplus}^\partial(\config(D^m), \config(N)) \\
	\imm^{TOP,s}_{\partial}(D^m,N) & \RR \map_{\finplus}^\partial(\config^\loc(D^m), \config^\loc(N)) \\
	};
	\path[->,font=\scriptsize]
		(m-1-1) edge node [auto] {} (m-1-2);
	\path[->,font=\scriptsize]
		(m-2-1) edge node [auto] {} (m-2-2);
	\path[->,font=\scriptsize]
		(m-1-1) edge node [left] {} (m-2-1);
	\path[->,font=\scriptsize] 		
		(m-1-2) edge node [auto] {} (m-2-2);
	\end{tikzpicture}
\end{equation}
Both squares are cartesian, so we can view the map of squares as a cartesian cube. Then the lower face (consisting of the immersion spaces and spaces of maps of local configuration categories) is cartesian. So the top face is also cartesian. Furthermore, the top row of (\ref{eq:square1}) is a weak equivalence since both terms are contractible by \cite{BoavidaWeiss2}. Therefore, the homotopy fiber of the top row of (\ref{eq:square2}) over the basepoint (determined by $e$) is weakly contractible. Repeating the argument for different choices of $e$ we get the claim.
\end{proof}

\begin{cor} The commutative square
\begin{equation*}
	\begin{tikzpicture}[descr/.style={fill=white}, baseline=(current bounding box.base)] ]
	\matrix(m)[matrix of math nodes, row sep=2.5em, column sep=2.5em,
	text height=1.5ex, text depth=0.25ex]
	{
	\emb_{\partial}(D^m,N) & \inj^s_\partial(D^m, N) \\
	\imm_{\partial}(D^m,N) & \Omega^m \Gamma \\
	};
	\path[->,font=\scriptsize]
		(m-1-1) edge node [auto] {} (m-1-2);
	\path[->,font=\scriptsize]
		(m-2-1) edge node [auto] {} (m-2-2);
	\path[->,font=\scriptsize]
		(m-1-1) edge node [left] {} (m-2-1);
	\path[->,font=\scriptsize] 		
		(m-1-2) edge node [auto] {} (m-2-2);
	\end{tikzpicture}
\end{equation*}
is homotopy cartesian. Here $\Gamma$ is the space of pairs $(x, \alpha)$ where $x$ is a point in the interior of $N$ and $\alpha$ is a derived operad map $E_m \to E_{T_xN}$, and $E_{T_xN}$ is the  operad of little disks in $T_xN$ (it is abstractly isomorphic to $E_n$.)
\end{cor}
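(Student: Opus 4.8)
The plan is to compare the square in the statement, which I will call $\sQ$, with the square $\sS$
\[
\begin{array}{ccc}
\emb_\partial(D^m,N) & \longrightarrow & \RR\map_{\finplus}^\partial(\config(D^m),\config(N))\\
\downarrow & & \downarrow\\
\imm_\partial(D^m,N) & \longrightarrow & \RR\map_{\finplus}^\partial(\config^\loc(D^m),\config^\loc(N))
\end{array}
\]
whose horizontal arrows are the evaluation maps; write $\sR$ and $\sL$ for its two right-hand terms. This $\sS$ is homotopy cartesian because both of its horizontal arrows are weak equivalences: the top one by \cite[Theorem 10.1]{BoavidaWeiss2} (using $n-m>3$ for convergence of the Goodwillie--Weiss tower of $\emb_\partial(D^m,-)$), the bottom one by the Smale--Hirsch theorem together with the corresponding local statement. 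This $\sS$ is the smooth variant of square (\ref{eq:square2}) above.

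First I would build a map of squares $\sQ\to\sS$. It is the identity on the two left-hand corners. On the top right-hand corner it is the evaluation map $v\colon\inj^s_\partial(D^m,N)\to\sR$ of the Theorem above; this is compatible with the inclusion $\emb_\partial(D^m,N)\hookrightarrow\inj^s_\partial(D^m,N)$ because the evaluation map for smooth embeddings factors through $\inj^s_\partial(D^m,N)$. On the bottom right-hand corner it is a weak equivalence $\Omega^m\Gamma\xrightarrow{\ \simeq\ }\sL$ obtained by unwinding definitions: a morphism $\config^\loc(D^m)\to\config^\loc(N)$ over $\finplus$, relative to the boundary data, is a section over $D^m$ (relative to the prescribed boundary section) of the bundle whose fibre over $y$ is the space of pairs $(x,\alpha)$ with $x\in\intr(N)$ and $\alpha$ a derived operad map $E_{T_yD^m}\to E_{T_xN}$; since $TD^m$ is trivial this bundle is $D^m\times\Gamma$, and the neat embedding $e$ fixed in the proof above provides a basepoint identifying the space of such sections with $\Omega^m\Gamma$. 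Under this identification the bottom arrow of $\sQ$ becomes the map sending an immersion to its operadic formal (Gauss) data, and the right-hand arrow of $\sQ$ becomes $\inj^s_\partial(D^m,N)\xrightarrow{\,v\,}\sR\to\sL$, as required.

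Since $\sS$ is homotopy cartesian, to conclude that $\sQ$ is homotopy cartesian it is enough to show that the induced map of homotopy pullbacks
\[
\Phi\colon\ \imm_\partial(D^m,N)\times_{\sL}\inj^s_\partial(D^m,N)\ \longrightarrow\ \imm_\partial(D^m,N)\times_{\sL}\sR
\]
is a weak equivalence: for then the canonical map $\emb_\partial(D^m,N)\to\imm_\partial(D^m,N)\times_{\sL}\inj^s_\partial(D^m,N)$ followed by $\Phi$ is the canonical map for $\sS$, which is a weak equivalence, whence the former is too. Now $\Phi$ lies over $\imm_\partial(D^m,N)$, and on the homotopy fibre over a point $g$ it is the map between the homotopy fibres of $\inj^s_\partial(D^m,N)\to\sL$ and of $\sR\to\sL$ over the image of $g$, induced by $v$. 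By the Theorem, $v$ has homotopy fibres which are empty or contractible; this property passes to those induced maps of homotopy fibres over $\sL$, and hence to $\Phi$ itself. On the other hand $\Phi$ is surjective on $\pi_0$, because precomposing it with the canonical map from $\emb_\partial(D^m,N)$ yields the weak equivalence coming from $\sS$. A map whose homotopy fibres are empty or contractible and which is surjective on $\pi_0$ is a weak equivalence --- non-emptiness of a homotopy fibre depends only on the path component of the target --- so $\Phi$ is a weak equivalence, and we are done.

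The step I expect to be the main obstacle is the identification $\Omega^m\Gamma\simeq\sL$ and the verification that it is compatible with the maps out of $\imm_\partial(D^m,N)$ and $\inj^s_\partial(D^m,N)$; the homotopical part above is routine, its only delicate point being that the Theorem furnishes merely an \emph{almost} weak equivalence rather than a weak equivalence, which is exactly why one must import the $\pi_0$-surjectivity from the fact that $\sS$ is homotopy cartesian.
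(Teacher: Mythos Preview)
Your argument is correct and follows essentially the same route as the paper: the paper's proof simply invokes \cite[Theorem~1.1]{BoavidaWeiss2} in its boundary version (which is precisely the statement that your square $\sS$ is homotopy cartesian, with the lower right-hand corner already identified with $\Omega^m\Gamma$) together with the Theorem above, and leaves the rest implicit. Your careful treatment of how to pass from an \emph{almost} weak equivalence $v$ to cartesianness of $\sQ$ --- via the map $\Phi$ of homotopy pullbacks and the $\pi_0$-surjectivity imported from $\sS$ --- is a genuine elaboration of a point the paper does not spell out. One small correction: the reference you want for the cartesianness of $\sS$ (equivalently, for the top horizontal arrow being a weak equivalence in codimension at least $3$) is Theorem~1.1 of \cite{BoavidaWeiss2}, not Theorem~10.1; the latter is the special case $N=D^n$, which is what the present paper extends.
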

\begin{proof}
We obtain this from \cite[Theorem 1.1]{BoavidaWeiss2} in the boundary version and the theorem above.
\end{proof}

 \end{document}